\documentclass[leqno,11pt]{amsart}
\usepackage[top=25mm, bottom=25mm, left=25mm, right=25mm]{geometry}
\usepackage{amssymb}
\usepackage{dsfont}
\usepackage{mathtools}
\usepackage{enumitem}

\usepackage{color}

\usepackage[hidelinks]{hyperref}
\usepackage{nameref}
\hypersetup{colorlinks = true, urlcolor = blue, linkcolor = blue, citecolor = red}

\numberwithin{equation}{section}
\newtheorem{theorem}{Theorem}
\numberwithin{theorem}{section}
\newtheorem{lemma}[theorem]{Lemma}
\newtheorem{proposition}[theorem]{Proposition}

\newtheorem{conjecture}{Conjecture}
\newtheorem{remark}[theorem]{Remark}

\newcommand{\KK}{\mathbb{K}}
\newcommand{\RR}{\mathbb{R}}
\newcommand{\ZZ}{\mathbb{Z}}
\newcommand{\CC}{\mathbb{C}}
\newcommand{\NN}{\mathbb{N}}

\newcommand{\Var}{{\rm Var}}
\newcommand{\BV}{{\rm BV}}

\newcommand{\ind}[1]{{\mathds{1}_{{#1}}}}

\makeatletter
\@namedef{subjclassname@2020}{%
	\textup{2020} Mathematics Subject Classification}
\makeatother

\begin{document}
	
	\title[Variation of the centered maximal operator]{Variation of the one-dimensional centered maximal \\ operator on simple functions with gaps between pieces}
	
	\subjclass[2020]{Primary 42B25; Secondary 26A45.}
	\keywords{Maximal operator, bounded variation.}
	
	\author[P. Hagelstein]{Paul Hagelstein}
	\address{Paul Hagelstein (\textnormal{Paul\_Hagelstein@baylor.edu}) \newline
		Baylor University, Waco, Texas 76798, USA} 
	
	\author{Dariusz Kosz}
	\address{ Dariusz Kosz (\textnormal{Dariusz.Kosz@pwr.edu.pl}) \newline
	Wroc{\l}aw University of Science and Technology,
	50-370 Wrocław, Poland
	}

	\author[K. Stempak]{Krzysztof Stempak}
	\address{Krzysztof Stempak (\textnormal{Krz.Stempak@gmail.com}) \newline
	55-093 Kie\l{}cz\'ow, Poland}    
	
	\begin{abstract}
	Let $M$ denote the centered Hardy--Littlewood operator on $\RR$. We prove that
	\[
	\Var (Mf)\le \Var (f) - \frac12\big| |f(\infty)|-|f(-\infty)|\big|
	\]
	for piecewise constant functions $f$ with nonzero and zero values alternating. The above inequality strengthens a recent result of Bilz and Weigt \cite{BW} proved for
	indicator functions of bounded variation vanishing at $\pm\infty$.
	We conjecture that the inequality holds for all functions of bounded variation, representing a stronger version of the existing conjecture $\Var (Mf)\le \Var (f)$. We also obtain the discrete
	counterpart of our theorem, moreover proving a transference result on equivalency between both settings that is of independent interest.
	\end{abstract}
	
	\maketitle		
	
	\section{Introduction} \label{S1}
	 
	Regularity properties of maximal operators have been investigated in numerous papers and various frameworks. Kinnunen \cite{Kinnunen1997} initiated the study
	of boundedness of the Hardy--Littlewood maximal operator on the Sobolev spaces $W^{1,p}(\RR^n)$ for $p \in (1,\infty)$. Subsequently, \cite{Kinnunen1997} was complemented
	by Kinnunen and Lindqvist \cite{KiLi1998} to the setting of open subsets of $\RR^n$. See also Tanaka \cite{Tanaka2002} for the case $p=1$ and $n=1$.
	Starting from the Euclidean setting and the Hardy--Littlewood maximal operators, both centered and uncentered, the study then encompassed their variants
	such as fractional maximal operators or maximal operators of convolution type; see \cite{CS2013}, \cite{CMP2017}, \cite{Liu2017}, \cite{M2017}, and references therein.
	Also discrete analogues of these operators were considered, and related questions in both settings have been recently studied; see \cite{M2019}, \cite{GRK2021}, \cite{CGRM2022}, \cite{BGRMW2023}, \cite{GR2023}.
	
	It was an important observation of Aldaz and P{\'e}rez L{\'a}zaro \cite{AP2007} that maximal operators can actually improve the regularity of involved 
	functions rather than simply preserve it. They also pointed out the role to be played by the variation when measuring the regularity. 
	In particular, it was proved in \cite{AP2007} that if $f\in\BV(\RR)$, then for the uncentered Hardy--Littlewood maximal operator $\widetilde{M}$ 
	the function $\widetilde{M}f$ is absolutely continuous on $\RR$ and $\Var(\widetilde{M}f)\le \Var(f)$. For the centered operator $M$, on the other hand, Kurka \cite{Kurka2010} proved
	\[
	\Var(Mf)\le C\Var(f), \qquad f\in\BV(\RR),
	\]
	with some, quite large, $C>0$. It was conjectured that also in this case $C=1$ is enough, see e.g. \cite[Section~1]{Kurka2010}. In the discrete case, 
	i.e., for $f\in\BV(\ZZ)$ and the discrete counterpart of $M$, 
	the same was conjectured, see \cite[Question~B]{BCHP2012}. It is worth recalling the common truth that the  uncentered maximal operator has better 
	regularity behavior than its centered counterpart. Furthermore, the proofs of analogous results are usually much subtler in the centered setting. 
	
	It seems that both conjectures, although reasonable and expected to be true, do not take the full advantage of how the limits $f(\pm\infty)$ or $F(\pm\infty)$ 
	for $f\in\BV(\RR)$ or $F\in\BV(\ZZ)$, respectively, determine the limits $Mf(\pm \infty)$ or $MF(\pm \infty)$; see Section~\ref{S3}. We thus propose the following strengthening, which would complement analogous inequalities for other maximal operators; see Remark~\ref{R2}. 
		
	\begin{conjecture} \label{conj2}
		Suppose $f\in\BV(\RR)$. Then
		\begin{equation} \label{eq:conj2}
		\Var (Mf) \leq \Var (f) - \frac12 \big||f(\infty)|-|f(-\infty)|\big|.
		\end{equation}
	\end{conjecture}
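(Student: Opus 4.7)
The plan is to reduce the conjecture to piecewise constant functions and then to extend the authors' theorem beyond the alternating-gap case, while using the boundary data in a critical way.

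The first step is to pin down the asymptotic behavior of $Mf$. For any $f\in\BV(\RR)$ a direct computation with balls of radius $tx$ as $x\to\pm\infty$ gives
\[
Mf(\pm\infty)=\max\Bigl(|f(\pm\infty)|,\,\tfrac12\bigl(|f(\infty)|+|f(-\infty)|\bigr)\Bigr),
\]
and a sign-case analysis on $|f(\infty)|-|f(-\infty)|$ yields the key identity
\[
|Mf(\infty)-Mf(-\infty)|=\tfrac12\bigl||f(\infty)|-|f(-\infty)|\bigr|.
\]
Hence the subtracted term in \eqref{eq:conj2} is precisely the amount of variation that $Mf$ is already forced to spend at infinity, and the conjecture is the sharpest strengthening of $\Var(Mf)\le\Var(f)$ compatible with the asymptotic data alone.

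Second, I would use a density argument to reduce to piecewise constant $f$. Choosing truncations $f_n$ with $f_n\to f$ almost everywhere, $\Var(f_n)\to\Var(f)$, and $f_n(\pm\infty)\to f(\pm\infty)$, dominated convergence for ball averages gives $Mf_n\to Mf$ pointwise, and lower semicontinuity of total variation then yields $\Var(Mf)\le\liminf_n\Var(Mf_n)$. This reduces the conjecture to the class of arbitrary compactly supported piecewise constant functions; note that even this reduced statement is not yet known, since the standing conjecture $\Var(Mf)\le\Var(f)$ is itself open with only Kurka's $C$-weighted version available.

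The hard part is then extending the authors' main theorem from alternating step functions to \emph{arbitrary} step functions. A tempting strategy is to insert a zero gap of length $\varepsilon$ at every transition to produce an alternating $\tilde f_\varepsilon$, apply the main theorem, and pass to the limit $\varepsilon\to 0^+$. The obstruction is that inserting a gap between values $a,b$ increases $\Var$ by $|a|+|b|-|a-b|\ge 0$, which must be offset by an equivalent increase in $\Var(M\tilde f_\varepsilon)$ over $\Var(Mf)$ near the inserted gap; making this local excess \emph{equal} rather than merely bounded requires delicate analysis of how $M$ responds in a shrinking neighborhood of a jump. An alternative is to attack the discrete counterpart, where excursions of $MF$ between successive local maxima can be tracked combinatorially, and then transfer the conclusion via the equivalence developed in Section~\ref{S3}. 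Either way, the technical heart is to decompose $\Var(Mf)$ as a sum over maximal excursions and charge each excursion's cost against the corresponding piece of $\Var(f)$, with the two unbounded excursions touching $\pm\infty$ absorbing exactly $\tfrac12\bigl||f(\infty)|-|f(-\infty)|\bigr|$.
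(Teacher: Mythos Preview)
The statement you are attempting to prove is labeled a \emph{conjecture} in the paper, and the paper does not prove it. What the paper does prove is the special case of Theorem~\ref{T1} (step functions with the separation condition $\alpha_k\alpha_{k+1}=0$), the reduction to nonnegative simple functions in Proposition~\ref{T3}, and the transference Theorem~\ref{T2}. Your proposal recovers exactly this skeleton: the asymptotic identity $|Mf(\infty)-Mf(-\infty)|=\tfrac12\bigl||f(\infty)|-|f(-\infty)|\bigr|$ matches the display opening Section~\ref{S3}, and your density reduction is the content of Proposition~\ref{T3}. But your ``hard part'' is precisely the open problem, and you do not resolve it---you yourself say so (``even this reduced statement is not yet known'').

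Concretely, the $\varepsilon$-gap insertion strategy fails for the reason you identify: inserting a zero gap between adjacent values $\alpha,\beta$ inflates $\Var(f)$ by $|\alpha|+|\beta|-|\alpha-\beta|$, and nothing forces $\Var(Mf)$ to inflate by the same amount. The paper's closing paragraph in Section~\ref{S4} makes this obstruction explicit with the example $g=\ind{[-2,-1)\cup[1,2)}+C\ind{[-2,2)}$ for large $C$, showing that the method behind Theorem~\ref{T1}---choosing $I_y$ as the smallest interval centered at $y$ containing $I_x$---is grossly suboptimal once adjacent pieces are both nonzero. Your alternative discrete-combinatorial route is equally natural but equally open. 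Two small corrections: the transference is in Section~\ref{S2}, not Section~\ref{S3}; and ``compactly supported'' is wrong when $f(\pm\infty)\neq 0$---the correct target class is the paper's \emph{simple} functions, eventually equal to $a$ and $b$ rather than to $0$.
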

	
	Our first main result, Theorem~\ref{T1}, supports this conjecture. We mention that there is no need to formulate separately the discrete 
	counterpart of Conjecture~\ref{conj2}, since by our second main result, Theorem~\ref{T2}, inequality \eqref{eq:conj2} and its discrete analogue are equivalent. Clearly, verification of either version of
	the conjecture, continuous or discrete, reduces to the consideration of nonnegative functions. We shall prove that a certain further reduction to nonnegative simple functions is available; see Proposition~\ref{T3}. Finally, we show that the constants $1$ 
	and $\frac12$ in \eqref{eq:conj2} are optimal; see Section~\ref{S4}.
	
	Let $\NN \coloneqq \{1,2,\dots\}$. Let $\KK$ be either $\RR$ or $\ZZ$. For $g \colon \KK \to \CC$ and any nonempty $E \subseteq \KK$ define
	\[
	\Var_E (g) \coloneqq
	\sup_{\text{monotone } \phi \colon \ZZ \to E} \sum_{m \in \ZZ} \big|g\big(\phi(m+1)\big) - g\big(\phi(m)\big)\big|, 
	\]
	which we call the \emph{variation of} $g$ \emph{over} $E$.
	We write $g \in \BV(\KK)$ when the \emph{total variation} $\Var (g) \coloneqq \Var_\KK(g)$ is finite. For each such $g$ the limits 
	$g(\pm\infty) \coloneqq \lim_{x \to \pm \infty} g(x)$ exist and, when $\KK = \RR$, for each $x \in \RR$ the one-sided limits $g(x^\pm) \coloneqq \lim_{y \to x^\pm} g(y)$ 
	exist as well. 
	
	Recall the definition of the centered Hardy--Littlewood maximal operator $M$. We set
	\[
	Mg(x) \coloneqq \sup_{r>0} \frac{1}{2r} \int_{x-r}^{x+r} |g|
	\quad \text{and} \quad
	MG(n) \coloneqq \sup_{m\ge0} \frac{1}{2m+1} \sum_{y  = n - m}^{n + m} |G(y)|
	\]
	for any locally integrable $g \colon \RR \to \CC$ and $x \in \RR$, and for any $G \colon \ZZ \to \CC$ and $n \in \ZZ$. Of course, if $g \in \BV(\RR)$, then $g$ is bounded, and hence so is $Mg$. The same is true for $MG$ when $G \in \BV(\ZZ)$.
	
	Our first main result says that \eqref{eq:conj2} holds for a certain special class of functions. 
	
	\begin{theorem} \label{T1}
		Let  $-\infty<A<B < +\infty$, $a,b\in\CC$, $K \in \NN$, and $\{\alpha_k\}_1^K \subset\CC$ such that $\alpha_k \alpha_{k+1} = 0$, $1\le k\le K-1$, be given.
		For a system $-\infty<A=t_0<t_1<\dots <t_K=B < +\infty$
		define
		\[
		f(x) \coloneqq a \ind{(-\infty,A)}(x) + \sum_{k=1}^K \alpha_k \ind{[t_{k-1},t_k)}(x) + b \ind{[B,\infty)}(x)
		\]
		for all $x \in \RR$. Then 
		\begin{equation} \label{eq:T1}
		\Var (Mf) \leq \Var (f) - \frac12 \big||a|-|b|\big|.
		\end{equation} 
		
		Similarly, replacing the system of real numbers  $\{t_k\}_0^K$ by a system of integers $-\infty<A=n_0<n_1<\dots <n_K=B < +\infty$, and assuming $\alpha_k \alpha_{k+1} = 0$, $1\le k\le K-1$, define
		\[
		F(n) \coloneqq a \ind{(-\infty,A)}(n) + \sum_{k=1}^K \alpha_k \ind{[n_{k-1},n_k)}(n) + b \ind{[B,\infty)}(n)
		\]
		for all $n \in \ZZ$. 
		Then
		\begin{equation} \label{eq:T1a}
		\Var(MF) \leq \Var(F) -  \frac12 \big||a|-|b|\big|.
		\end{equation} 
	\end{theorem}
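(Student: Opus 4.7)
The plan is to proceed in three steps. First, I would reduce to the nonnegative case: since $Mg = M|g|$ for every locally integrable $g$ and the gap condition $|\alpha_k||\alpha_{k+1}| = 0$ is preserved under $f \mapsto |f|$, while $\Var(|f|) \le \Var(f)$ and $||a|-|b||$ are unaffected, it suffices to prove \eqref{eq:T1} under the assumption $a,b,\alpha_1,\dots,\alpha_K \in [0,\infty)$; the inequality then reads $\Var(Mf) \le \Var(f) - \tfrac12|a-b|$. The same reduction works for the discrete statement \eqref{eq:T1a}.

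Next, I would pin down the behavior of $Mf$ at infinity. A direct inspection of the admissible radii suggests that, for $|x|$ large, $Mf(x)$ is governed by the competition between small radii sitting entirely in the tail plateau (returning $a$ on the left, $b$ on the right) and very large radii, which average out the whole support and, since $\int_A^B f < \infty$, tend to $\tfrac{a+b}{2}$. I expect to obtain
\[
Mf(-\infty) = \max\{a, \tfrac{a+b}{2}\}, \qquad Mf(+\infty) = \max\{b, \tfrac{a+b}{2}\},
\]
so that $|Mf(+\infty) - Mf(-\infty)| = \tfrac12|a-b|$. This identity already isolates the source of the improvement over $\Var(f)$: the endpoint jumps $a \to \alpha_1$ and $\alpha_K \to b$ of $f$ are replaced in $Mf$ by a tail-to-tail movement smaller by exactly $\tfrac12|a-b|$.

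The bulk of the argument is then to analyze $Mf$ on the bounded region where the bumps live. Because $f$ is piecewise constant, $Mf$ is Lipschitz off a finite exceptional set and $\Var(Mf)$ decomposes as a telescoping sum over an alternating sequence of local maxima and minima: a local maximum lies on or near each nonzero piece $[t_{k-1}, t_k)$ (possibly shared between adjacent bumps when their separating gap is narrow), and a local minimum lies in each separating zero region. The target estimate is that the vertical excursion of $Mf$ around each bump is controlled by the contribution $f$ makes to $\Var(f)$ at the same bump, essentially the Bilz--Weigt mechanism from \cite{BW} upgraded from $\{0,1\}$-heights to general nonnegative heights. The main obstacle I foresee is that the radius (nearly) attaining the supremum defining $Mf$ at a local maximum need not fit inside a single bump; it may span several bumps and cut into the tails, threatening to over-count contributions. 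The alternating-zero hypothesis is precisely what tames this: any wide optimal radius is forced to cross a full zero stretch, producing a rigid disjointness among the extremal radii of distinct local maxima and making the local-to-global comparison telescope correctly. Combining this local estimate with the tail identity above yields \eqref{eq:T1}, and the discrete statement \eqref{eq:T1a} follows by the same argument with sums in place of integrals (and in any case, by Theorem~\ref{T2}, is equivalent to the continuous one).
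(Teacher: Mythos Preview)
Your reductions (to nonnegative data, and from $\ZZ$ to $\RR$ via extension/Theorem~\ref{T2}) and your computation of $Mf(\pm\infty)$ are correct and match the paper. The gap is in the ``bulk'' paragraph.

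The mechanism you propose---that the separation condition forces a \emph{rigid disjointness among the extremal radii of distinct local maxima}---is not true and is not what the paper uses. For a function with several bumps of different heights, the optimal interval $I_{x_n}$ at one representative local maximum $x_n$ can (and typically does) swallow several bumps and overlap the optimal intervals at other local maxima; nothing prevents this. So a bump-by-bump accounting based on disjointness of extremal radii does not telescope. Relatedly, your picture ``one local maximum per nonzero piece, one local minimum per zero piece'' need not hold; the number of local maxima can be strictly smaller than the number of nonzero pieces.

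What the paper does instead is linearize $Mf$ against the one-bump ``tent'' functions
\[
T_{I_k}(x)=\frac{|I_k|}{|J_{I_k}(x)|},\qquad J_{I_k}(x)\ \text{the smallest interval centered at }x\ \text{containing }I_k,
\]
and prove (Lemma~\ref{L3}) that on each monotone stretch between a representative maximum $x_n$ and an adjacent minimum $y_n$ one has
\[
Mf(x_n)-Mf(y_n)\ \le\ \sum_{k=1}^K \alpha_k\,\Var_{[y_n,x_n]}(T_{I_k}),
\]
via a single comparison interval $I_{y_n}$ sharing an endpoint with $I_{x_n}$ and an elementary convexity inequality $\tfrac1{\rho_2}-\tfrac1{\rho_2+\rho_0}\le\tfrac1{\rho_1}-\tfrac1{\rho_1+\rho_0}$. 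Summing over all stretches and using $\Var(T_{I_k})=2$ gives $\Var(Mf)\le\sum_k 2\alpha_k$ plus boundary corrections. The separation condition $\alpha_k\alpha_{k+1}=0$ then enters only at the very end, and in a much simpler way than you suggest: it makes $|\alpha_{k+1}-\alpha_k|=\alpha_k+\alpha_{k+1}$, so that $\sum_k 2\alpha_k$ (minus the appropriate edge terms) is exactly $\Var(f)$. Without separation the same argument yields only $\Var(Mf)\le 2\sum_k|\alpha_k|$, which is the paper's remark after Theorem~\ref{T1}. A short case analysis ($\alpha_1\le a$ versus $\alpha_1>a$, together with Lemma~\ref{L2} pinning $l(I_{x_n}),r(I_{x_n})$ to piece-endpoints with $\alpha_{k'},\alpha_{k''}\ge Mf(x_n)$) handles the boundary and produces the extra $-\tfrac12|a-b|$.
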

	
	Theorem~\ref{T1} implies \eqref{eq:T1} or \eqref{eq:T1a} for all indicator functions of bounded variation, $f \colon \RR \to \{0,1\}$ or $F \colon \ZZ \to \{0,1\}$, respectively, thus strengthening
	\cite[Theorems~1.1~and~1.3]{BW}.
	On the other hand, Theorem~\ref{T1} does not recover \cite[Theorem~1.2]{BW}, which asserts that $\Var (Mf) \leq \Var (f)$ holds for every $f \colon \RR \to [0,\infty)$ satisfying $f(x) = 0$ or $f(x)=Mf(x)$ for almost every $x \in \RR$.
	
	Notice the \emph{separation condition} $\alpha_k \alpha_{k+1} = 0$ encoded in the systems  
	$\{t_k\}_0^K$ and $\{n_k\}_0^K$.
	In fact, our techniques allow us to skip this condition at the expense of using $|\alpha_{k}| + |\alpha_{k+1}|$, not $|\alpha_{k+1}-\alpha_k|$, on the right-hand side of  \eqref{eq:T1} or \eqref{eq:T1a}. For example, if $\KK = \ZZ$ and $a=b=0$, then \eqref{eq:T1a} would take the form $\Var(MF) \leq 2(|\alpha_1| + \dots + |\alpha_K|)$. Building on this, one can recover \cite[Theorem~1.1]{M2017}, which asserts that $\Var(MF) \leq 2 \| F \|_{\ell^1(\ZZ)}$ holds for all $F \in \ell^1(\ZZ)$. 
	
	Moreover, by combining Theorem~\ref{T1} with \cite[Subsection~5.3]{R2019}, we obtain a variant of \eqref{eq:T1} with $M$ replaced by $M^{\theta}$ for any $\theta > 0$, where $M^\theta$ is the nontangential maximal operator studied in \cite{R2019}.
	
	Lastly, our methods differ from \cite{BW} and align more closely with \cite{M2017}. In particular, the approach in Lemma~\ref{L3} resembles \cite[Section~2]{M2017}, though it gains additional efficiency through Lemma~\ref{L2}.        
	
	Throughout the paper, $I$ with possible subscripts and/or superscripts affixed will denote an open bounded subinterval of $\RR$. Then $c(I), l(I), r(I)$ will be the center and the left/right endpoint of  $I$, respectively.
	As already mentioned, $\Var$ with no subscripts, $\RR$ or $\ZZ$, will denote the total variation; it will be clear from the context to which situation this symbol refers to. Appearance of a subscript will indicate that the variation is related to a subset represented by this subscript.

	\section{Transference} \label{S2}
	
	The main goal of this section is to prove a transference result, Theorem~\ref{T2}, which seems to be of independent interest.  We begin with a simple result that exhibits a substantial improvement of regularity of $M$. Namely, given $g \in \BV(\RR)$, not only does $Mg$ have one-sided limits at every $x\in\RR$ but it is one-sided continuous as well. Of course, the first claim follows, since $Mg \in \BV(\RR)$, cf.~\cite{Kurka2010}, but we decided to provide a short proof of this fact to make the whole argument self-contained.
	
	\begin{lemma} \label{limits}
	For a generic function $g \in \BV(\RR)$ and $x \in \RR$ the limits $Mg(x^\pm)$ exist and we have
	\begin{equation} \label{eq:T2}
	Mg(x) = \min \{ Mg(x^-), Mg(x^+) \}.
	\end{equation}	
	\end{lemma}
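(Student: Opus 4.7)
The plan is to prove the stronger formula $Mg(x^\pm) = \max\{Mg(x), |g(x^\pm)|\}$, from which both the existence of the one-sided limits and the identity \eqref{eq:T2} follow immediately. First I would observe that $Mg$ is lower semicontinuous on $\RR$: for every fixed $r > 0$ the map $y \mapsto A_r(y) := \frac{1}{2r}\int_{y-r}^{y+r}|g|$ is continuous, so $Mg = \sup_{r>0} A_r$ is a supremum of continuous functions and hence lower semicontinuous, which gives $\liminf_{y \to x^+} Mg(y) \geq Mg(x)$ and analogously from the left.

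For the upper bound on the limsup I would work on the right, the left side being symmetric. Pick $y_n \to x^+$ realizing $\limsup_{y \to x^+} Mg(y)$ and near-optimal radii $r_n > 0$ with $A_{r_n}(y_n) \geq Mg(y_n) - 1/n$, then pass to a subsequence with $r_n \to r^* \in [0,\infty]$. If $r^* \in (0,\infty)$, joint continuity yields $A_{r_n}(y_n) \to A_{r^*}(x) \leq Mg(x)$; if $r^* = \infty$, $A_{r_n}(y_n) \to \tfrac{1}{2}(|g(\infty)| + |g(-\infty)|) \leq Mg(x)$. If $r^* = 0$, either the interval $(y_n - r_n, y_n + r_n)$ eventually avoids $x$ (and then $A_{r_n}(y_n) \to |g(x^+)|$), or it contains $x$: setting $a_n = r_n - (y_n - x) \in [0, r_n]$ and $b_n = r_n + (y_n - x) \in (0, 2r_n]$, so that $a_n \leq b_n$, and splitting the integral at $x$, one obtains in the limit $A_{r_n}(y_n) \to \tau |g(x^-)| + (1-\tau)|g(x^+)|$ with $\tau = \lim a_n/(a_n + b_n) \in [0, \tfrac{1}{2}]$, which is bounded by $\max\{|g(x^+)|,\, \tfrac{1}{2}(|g(x^-)| + |g(x^+)|)\}$. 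Combined with $Mg(x) \geq \lim_{r \to 0^+} A_r(x) = \tfrac{1}{2}(|g(x^-)| + |g(x^+)|)$, every case yields $\limsup_{y \to x^+} Mg(y) \leq \max\{Mg(x), |g(x^+)|\}$.

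For the matching lower bound I would use the test radius $r(y) = (y - x)/2$ for $y > x$: the interval $(y - r(y), y + r(y))$ shrinks to $x$ from the right as $y \to x^+$, so $Mg(y) \geq A_{r(y)}(y) \to |g(x^+)|$, and combined with lower semicontinuity this gives $\liminf_{y \to x^+} Mg(y) \geq \max\{Mg(x), |g(x^+)|\}$. Thus $Mg(x^+) = \max\{Mg(x), |g(x^+)|\}$, and analogously $Mg(x^-) = \max\{Mg(x), |g(x^-)|\}$. Finally,
\[
\min\{Mg(x^-), Mg(x^+)\} = \max\{Mg(x),\, \min(|g(x^-)|,|g(x^+)|)\} = Mg(x),
\]
the last equality because $\min(|g(x^-)|, |g(x^+)|) \leq \tfrac{1}{2}(|g(x^-)| + |g(x^+)|) \leq Mg(x)$. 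The main obstacle is the $r^* = 0$ subcase where the shrinking interval straddles $x$: one must exploit the fact that, since $y_n > x$, the limiting weight $\tau$ is automatically constrained to $[0, \tfrac{1}{2}]$, which is precisely what keeps the convex combination of $|g(x^\pm)|$ within the bound governed by the admissible centered averages at $x$.
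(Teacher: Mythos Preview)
Your proof is correct, and it actually establishes the stronger and rather elegant identity $Mg(x^\pm)=\max\{Mg(x),|g(x^\pm)|\}$, from which the lemma follows at once. The compactness argument on the near-optimal radii is clean, and the key observation that the straddling weight $\tau$ is confined to $[0,\tfrac12]$ is exactly what pins the convex combination below $\max\{|g(x^+)|,\tfrac12(|g(x^-)|+|g(x^+)|)\}\le\max\{|g(x^+)|,Mg(x)\}$.

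The paper proceeds differently. For the existence of $Mg(x^\pm)$ it replaces $g$ by a function $\tilde g$ that is constant on $(x-2\delta,x)$ and on $[x,x+2\delta)$, so that $|Mg-M\tilde g|\le\varepsilon$, and then decomposes $M\tilde g=\max\{M_{<\delta}\tilde g,\,M_{\geq\delta}\tilde g\}$: the short-radius part is computed explicitly as a piecewise-linear function near $x$, while the long-radius part is shown to be Lipschitz with constant $(2\delta)^{-1}\Var(g)$. For the identity \eqref{eq:T2} the paper again relies on this Lipschitz control of $M_{\geq\delta}g$ after reducing to the case where small radii are irrelevant. Your route avoids both the $\varepsilon$-approximation and the small/large-radius split, trading them for a subsequence extraction on $r_n$; it yields a sharper conclusion (the explicit formula for $Mg(x^\pm)$) with no additional cost. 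The paper's approach, on the other hand, isolates the Lipschitz estimate for $M_{\geq\delta}$, which is a reusable quantitative statement and reappears implicitly in the sampling procedure of Theorem~\ref{T2}.
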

	
	\begin{proof}
	We can assume $g \geq 0$. Regarding the existence of the limits, fix $\varepsilon > 0$ and take $\delta > 0$ such that $|g(y) - g(x^-)| < \varepsilon$ for $y \in (x- 2 \delta, x)$ and $|g(y) - g(x^+)|  < \varepsilon$ for $y \in (x, x + 2 \delta)$. Let
	$\tilde g(y)$ be $g(x^-)$, $g(x^+)$, or $g(y)$, if $y$ belongs to $(x-2\delta, x)$, $[x, x+2\delta)$, or $\RR\setminus (x-2\delta, x+2\delta)$, respectively. Then $|Mg(y) - M\tilde g(y)| \leq \varepsilon$ for $y \in \RR$. Note that $M \tilde g = \max\{M_{< \delta} \tilde g, M_{\geq \delta} \tilde g\}$, where $M_{< \delta}$ and $M_{\geq \delta}$ allow only the centered intervals $I$ with $|I| < 2 \delta$ and $|I| \geq 2 \delta$, respectively. If $y \in (x, x+\delta)$, then 
	\[
	M_{<\delta} \tilde g(y) = \max \big\{ g(x^+), (2\delta)^{-1} \big((\delta - y+x)g(x^-) + (\delta + y - x)g(x^+) \big)  \big\},
	\]
	whereas if $x < y < z < x+\delta$, then 
	\[
	|M_{\geq \delta} \tilde g(z) - M_{\geq \delta} \tilde g(y)| \leq (2\delta)^{-1} (z-y)\Var(g),
	\] 
	since for $r \geq \delta$ we have
	\begin{align*}
	\Big| \frac{1}{2r} \int_{z-r}^{z+r} \tilde g - \frac{1}{2r} \int_{y-r}^{y+r} \tilde g \Big|
	=\int_{y+r}^{z+r}\frac{|\tilde g(t)- \tilde g(t-2r)|}{2r} \,dt \leq \frac{(z-y) \Var(g)}{2r}
	\leq \frac{(z-y)\Var(g)}{2 \delta}.
	\end{align*}  
	Thus, $M \tilde g(x^+)$ exists and so $\limsup_{t \to x^+} Mg(t) - \liminf_{t \to x^+} Mg(t) \leq 2 \varepsilon$. Letting $\varepsilon \to 0$, we obtain that $Mg(x^+)$ exists as well. Analogously we verify that $Mg(x^-)$ exists.
	
	Regarding \eqref{eq:T2}, we have $Mg(x) \leq \min \{ Mg(x^-), Mg(x^+) \}$ by lower semicontinuity of $Mg$. To prove the opposite inequality, it suffices to consider the case
	$\min \{ Mg(x^-), Mg(x^+) \} > \frac{g(x^-) + g(x^+)}{2}$ because 
	$
	Mg(x) \geq \frac{g(x^-) + g(x^+)}{2}.
	$ 
	Assume $g(x^-) \leq g(x^+)$ (the case $g(x^-) \geq  g(x^+)$ is symmetric). Now 
	$Mg(x^-) > \max \{ g(x^-), \frac{g(x^-) + g(x^+)}{2} \}$ and so $Mg(y) = M_{\geq \delta} g(y)$ for some $\delta > 0$ and all $y \in (x-\delta, x)$. Since, as before, 
	$|M_{\geq \delta} g(x) - M_{\geq \delta} g(y)| \leq (2 \delta)^{-1} (x-y) \Var(g)$ for all such $y$, 
	the proof is complete.
	\end{proof}
	
	Now, for $\KK=\RR$ or $\KK=\ZZ$, and  $a,b \in \CC$, define
	\[
	\BV_a^b(\KK) \coloneqq \{g \in \BV(\KK) : g(-\infty) = a, \, g(+\infty) = b \}.
	\]
	By $C(\KK)$ we denote the smallest value $C \in (0,\infty]$ such that
	\[
	\Var (Mg) \leq C \Var (g), \qquad g \in \BV(\KK),
	\]
	and by $c_a^b(\KK)$ we denote the largest value $c \in [0,\infty)$ such that
	\[
	\Var (Mg) \leq C(\KK) \Var (g) - c, \qquad g \in \BV_a^b(\KK),
	\]
	with the convention $c_a^b(\KK) = 0$ if $C(\KK) = \infty$. 
	
	The transference result that follows complements \cite[Proposition~1.4]{BW}. We include the potential case $C(\KK) = \infty$ to emphasize that the proof works independently of whether $C(\KK)$ is finite or not. Nonetheless, in \cite{Kurka2010} it was shown that $C(\RR) < \infty$, thus Theorem~\ref{T2}, even in the weaker form with the additional assumption $C(\KK) < \infty$, immediately gives $C(\ZZ) < \infty$.  
	
	\begin{theorem} \label{T2}
		We have $C(\ZZ) = C(\RR)$ and $c_a^b(\ZZ) = c_a^b(\RR)$ for all $a, b \in \CC$.
	\end{theorem}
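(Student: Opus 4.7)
The plan is to prove both equalities by establishing two one-sided inequalities, one in each direction; the case $C(\RR) = \infty$ is handled trivially, since the second reduction below forces $C(\ZZ) = \infty$ as well, making $c_a^b(\ZZ) = c_a^b(\RR) = 0$ by convention. For the reduction from $\BV(\ZZ)$ to $\BV(\RR)$, given $G \in \BV_a^b(\ZZ)$ I would extend $G$ to the step function $\tilde g \colon \RR \to \CC$ defined by $\tilde g(x) \coloneqq G(n)$ for $x \in [n-\tfrac12, n+\tfrac12)$; clearly $\tilde g \in \BV_a^b(\RR)$ with $\Var(\tilde g) = \Var(G)$. The crucial identity is $M\tilde g(n) = MG(n)$ for every $n \in \ZZ$. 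Writing $S_k \coloneqq \sum_{|y-n|\le k}|G(y)|$ and $T_k \coloneqq |G(n-k-1)|+|G(n+k+1)|$, a direct computation gives
\[
A(r) \coloneqq \frac{1}{2r}\int_{n-r}^{n+r}|\tilde g| = \frac{S_k + (r-k-\tfrac12)T_k}{2r}, \qquad r \in (k+\tfrac12,\, k+\tfrac32),
\]
so that $A'(r) = ((2k+1)T_k - 2S_k)/(4r^2)$ has constant sign on this interval. Thus $A$ is monotone there and attains its maximum at an endpoint $r \in \{k+\tfrac12, k+\tfrac32\}$, yielding a discrete average $S_k/(2k+1)$ or $S_{k+1}/(2k+3)$. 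Taking the supremum over $k$ gives $\sup_{r>0}A(r) = MG(n)$. Consequently $\Var(MG) = \Var_\ZZ(M\tilde g) \le \Var(M\tilde g) \le C(\RR)\Var(G) - c_a^b(\RR)$, which yields $C(\ZZ) \le C(\RR)$ and, once the reverse $C$-inequality is established, $c_a^b(\ZZ) \ge c_a^b(\RR)$.

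For the reverse reduction from $\BV(\RR)$ to $\BV(\ZZ)$, given $f \in \BV_a^b(\RR)$ I would set $G_L(n) \coloneqq f(n/L)$ for integer $L \ge 1$. Then $G_L \in \BV_a^b(\ZZ)$ and $\Var(G_L) \le \Var(f)$. The key claim is
\[
\liminf_{L\to\infty}\Var(MG_L) \ge \Var(Mf).
\]
Given $\varepsilon > 0$, choose a partition $x_1 < \dots < x_K$ with $\sum_i|Mf(x_{i+1}) - Mf(x_i)| \ge \Var(Mf) - \varepsilon$, where each $x_i$ is arranged to be a continuity point of both $f$ and $Mf$ (possible since both have at most countably many jumps). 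A Riemann-sum argument shows that the discrete averages $(2m+1)^{-1}\sum_{y=n-m}^{n+m}|f(y/L)|$ with $n = \lfloor Lx_i\rfloor$ and $m/L \to r$ converge to $(2r)^{-1}\int_{x_i-r}^{x_i+r}|f|$ uniformly for $r$ in compact subsets of $(0,\infty)$; combined with tail control for large $r$ using boundedness of $f$ and existence of $f(\pm\infty)$, this yields $MG_L(\lfloor Lx_i\rfloor) \to Mf(x_i)$ for each $i$. Hence $\Var(MG_L) \ge \sum_i|MG_L(\lfloor Lx_{i+1}\rfloor) - MG_L(\lfloor Lx_i\rfloor)|$, which tends to $\sum_i|Mf(x_{i+1}) - Mf(x_i)| \ge \Var(Mf) - \varepsilon$, and $\varepsilon \to 0$ proves the claim. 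Combining with $\Var(MG_L) \le C(\ZZ)\Var(G_L) - c_a^b(\ZZ) \le C(\ZZ)\Var(f) - c_a^b(\ZZ)$ and passing to the limit gives $\Var(Mf) \le C(\ZZ)\Var(f) - c_a^b(\ZZ)$, so $C(\RR) \le C(\ZZ)$ and $c_a^b(\RR) \ge c_a^b(\ZZ)$.

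The principal obstacle is the $\liminf$ claim in the second reduction: converting pointwise convergence of a single average into convergence of the supremum over the averaging radius requires uniform control preventing the optimal $m$ from drifting to $0$ or $\infty$ relative to $L$. The BV assumptions on $f$ — in particular boundedness and the existence of the limits at $\pm\infty$ — should supply precisely this control. The first reduction, by contrast, is essentially a single monotonicity computation for $A(r)$.
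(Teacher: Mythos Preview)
Your extension direction is essentially identical to the paper's: the explicit monotonicity computation for $A(r)$ is a more detailed version of the paper's one-line remark that ``one of the averages of $|g|$ over the centered intervals $I$ of lengths $|I| = 2m \pm 1$ dominates those with $|I| \in [2m-1,2m+1]$.''

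The sampling direction, however, is genuinely different. The paper defines $G_N(n) \coloneqq 2^N\int_{(n-1/2)/2^N}^{(n+1/2)/2^N} g$ by \emph{cell-averaging}, which yields the exact identity $M\tilde G_N(n) = M_N g(n/2^N)$ for the auxiliary function $\tilde G_N$ built from $|g|$; the remaining work is then a dominated-convergence comparison of $MG_N$ with $M\tilde G_N$. You instead \emph{point-sample}, setting $G_L(n) = f(n/L)$, which sidesteps the $|g|$-versus-$g$ issue entirely (since $|G_L(n)| = |f(n/L)|$) at the cost of a genuine Riemann-sum limiting argument. Your approach is somewhat more elementary, and the uniform control you need is indeed available: for $m$ bounded away from $0$ the error between the discrete average and the corresponding integral average is at most $\Var(f)/(2m+1)$ (sum of oscillations of $|f|$ over disjoint cells), while for $m/L \to 0$ continuity of $f$ at $x_i$ forces the average toward $|f(x_i)| \le Mf(x_i)$. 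One point to make explicit: your assertion that $Mf$ has at most countably many discontinuities (needed to perturb the partition to continuity points) is not automatic from lower semicontinuity alone --- it requires either Kurka's bound $\Var(Mf) < \infty$ or the paper's Lemma~\ref{limits}, which establishes that $Mf$ has one-sided limits everywhere. You should also note that the case $\Var(Mf) = \infty$ (a priori possible before $C(\RR) \le C(\ZZ)$ is established) is handled by the same argument with partitions of arbitrarily large variation, yielding a contradiction with $C(\ZZ) < \infty$; the paper treats this dichotomy explicitly.
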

	
	\begin{proof}
	We first show $C(\ZZ) \leq C(\RR)$ and $c_a^b(\ZZ) \geq c_a^b(\RR)$ by using a natural extension procedure, cf.~\cite[Lemma~4.1]{BW}.
	If $C(\RR)=\infty$, then $c_a^b(\RR)=0$ and there is nothing to show. Assume therefore that $C(\RR)<\infty$.
	Given $G \in \BV_a^b(\ZZ)$, consider
	\[
	g \coloneqq \sum_{n \in \ZZ} G(n) \ind{[n-1/2,n+1/2)}.
	\]
	Then $\Var(g) = \Var(G)$ and so $g \in \BV_a^b(\RR)$. Note that $Mg(n) = MG(n)$, $n \in \ZZ$, since, for $m \in \NN$, one of the averages of $|g|$ over the centered intervals $I$
	of lengths $|I| = 2m \pm 1$ dominates those with $|I| \in [2m-1,2m+1]$. Thus, $\Var(MG)=\Var_\ZZ (Mg) \le \Var(Mg)$, and the claimed inequalities hold.
	
	In the opposite direction, we claim that $C(\ZZ)\geq C(\RR)$ and $c_a^b(\ZZ)\leq c_a^b(\RR)$, and use a convenient variant of a sampling procedure.  
	Obviously, we can assume $C(\ZZ)<\infty$.
	Given $g \in \BV_a^b(\RR)$ and $\varepsilon > 0$, in view of \eqref{eq:T2}, there is $N_*=N_*(\varepsilon) \in \NN$ such that for $E_{N_*} \coloneqq 2^{-N_*}\ZZ \cap [-2^{N_*},2^{N_*}]$ we have
	\begin{equation} \label{eq:T3}
	\Var_{E_{N_*}} (Mg) \geq \Var(Mg) - \varepsilon
	\quad \text{or} \quad \Var_{E_{N_*}} (Mg) \geq \varepsilon^{-1},
	\end{equation}
	depending on whether $\Var(Mg)$ is finite or not, respectively. For $N \in \NN$, let $M_{N}$ be the variant of $M$ allowing only the centered intervals $I$ with $|I| \in (2\NN-1)/2^N$. We have $\lim_{N \to \infty }M_N g = M g$ pointwise, since for $x \in \RR$ and $\delta > 0$ there are $0 < \rho_1 < \rho_2 < \infty$ such that $\frac{1}{2\rho} \int_{x-\rho}^{x+\rho} |g| \geq M g - \delta$ for all $\rho \in [\rho_1, \rho_2]$. Thus, there exists $N_0 \in \NN$ such that
	$
	\Var_{E_{N_*}} (M_{N} g) \geq \Var_{E_{N_*}} (M g) - \varepsilon
	$
	for all $N \geq N_0$.
	Consider $\tilde G_N(n) \coloneqq 2^N \int_{(n-1/2)/2^N}^{(n+1/2)/2^N} |g|$ and $G_N(n) \coloneqq 2^N \int_{(n-1/2)/2^N}^{(n+1/2)/2^N} g$. Then
	\begin{equation} \label{eq:T4}
	\max\{\Var(\tilde G_N), \Var(G_N)\} \leq \int_{-1/2}^{1/2} \Var_{(\ZZ + t)/2^N } (g) \, {\rm d}t \leq \Var(g),
	\end{equation}
	so that $\tilde G_N \in \BV_{|a|}^{|b|}(\ZZ)$ and $G_N \in \BV_a^b(\ZZ)$. Also, $MG_N(n) \leq M\tilde G_N(n) = M_N g(n/2^N)$ for $n \in \NN$. 
	
	Now, we show that $MG_N(n) \geq M\tilde G_N(n) - \varepsilon / 2^{2N_*+4}$ for all $n \in E_{N_*}^N \coloneqq \{2^N Q : Q \in E_{N_*}\} \subset \NN$, provided that $N \geq \max\{N_*, N_0\}$ is large enough. 
	Given $Q \in E_{N_*}$, we have 
	\[
	|I_Q|^{-1} \int_{I_Q} |g| \geq M\tilde G_N(2^N Q) - \varepsilon / 2^{2N_*+5}
	\] 
	for some $I_Q$ centered at $Q$. For each $N$ large enough, let $I_Q^N$ be the longest interval centered at $Q$, contained in $I_Q$, with $|I_Q^N| \in (2\NN - 1)/2^N$. Denoting $g_N \coloneqq \sum_{n \in \ZZ} G_N(n) \ind{[(n-1/2)/2^N, (n+1/2)/2^N)}$, we see that $\lim_{N \to \infty} |I_Q^N|^{-1} \int_{I_Q^N} |g_N| = |I_Q|^{-1} \int_{I_Q} |g|$ by the dominated convergence theorem on $I_Q$, because $g$ is continuous almost everywhere and bounded.  
	Since $MG_N(2^N Q) \geq |I_Q^N|^{-1} \int_{I_Q^N} |g_N|$ and $E_{N_*}$ is finite, $MG_N(n) \geq M\tilde G_N(n) - \varepsilon / 2^{2N_*+4}$ for some $N \geq \max\{N_*, N_0\}$, as desired. Thus,
	\begin{equation} \label{eq:T5}
	\Var_{E_{N_*}} (M g) - \varepsilon
	\leq 
	\Var_{E_{N_*}}(M_{N}g) = \Var_{E_{N_*}^N}(M \tilde G_N) \leq
	\Var_{E_{N_*}^N}(M G_N) + \varepsilon
	\leq \Var(M G_N) + \varepsilon.
	\end{equation}
	Assuming $\Var(Mg)=\infty$ gives
	\[
	\varepsilon^{-1} \le \Var_{E_{N_*}} (M g) \le \Var(MG_N) + 2\varepsilon \le C(\ZZ)\Var(g)-c_a^b(\ZZ) + 2\varepsilon,
	\]
	which contradicts our assumption $C(\ZZ)<\infty$. Consequently, $\Var(Mg)<\infty$ (this includes the case $C(\RR)<\infty$). Applying successively
	the first part of \eqref{eq:T3}, \eqref{eq:T5}, the definitions of $C(\ZZ)$ and $c_a^b(\ZZ)$ in the context of $G_N \in \BV_a^b(\ZZ)$, and \eqref{eq:T4} for $G_N$, and then letting $\varepsilon \to 0$, we verify the claim.
	\end{proof}

	Later on, we shall refer to the two methods from the proof of Theorem~\ref{T2} as to the \emph{extension procedure} and the \emph{sampling procedure}, respectively.
	
	Below we show that proving Conjecture~\ref{conj2} reduces to consideration of nonnegative simple functions.
	We call $G \in \BV(\ZZ)$ \emph{simple}, abusing
	slightly the common terminology, if there exist $a,b\in\CC$ and $N \in \NN$ such that $G(-n) = a$ and $G(n) = b$ when $n > N$. Similarly, $g \in \BV(\RR)$ is \emph{simple} if $G(-x) = a$ and $G(x) = b$ when $x > N$, and $g$ is constant on the intervals $[n-\frac{1}{2},n+\frac{1}{2})$, $n \in \ZZ$.
	 
	\begin{proposition} \label{T3}
		Conjecture~\ref{conj2} is true if \eqref{eq:conj2} is satisfied by nonnegative simple functions.
	\end{proposition}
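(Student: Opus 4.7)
The plan is to combine Theorem~\ref{T2} with a truncation step. I first reduce to nonnegative functions: since $Mf = M|f|$, $|f|(\pm\infty) = |f(\pm\infty)|$, and $\Var(|f|) \le \Var(f)$, the inequality \eqref{eq:conj2} for $|f|$ implies it for $f$. Next, Theorem~\ref{T2} (which applies because $C(\RR) < \infty$ by Kurka) shows that the $\RR$- and $\ZZ$-versions of the conjecture are equivalent, so it suffices to establish $\Var(MG) \le \Var(G) - \frac12 \big||G(\infty)| - |G(-\infty)|\big|$ for every nonnegative $G \in \BV(\ZZ)$.

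Fix such a $G$ and set $a \coloneqq G(-\infty)$, $b \coloneqq G(\infty)$. I would truncate $G$ by defining $G^R(n) \coloneqq G(n)$ for $|n| \le R$, $G^R(n) \coloneqq a$ for $n < -R$, and $G^R(n) \coloneqq b$ for $n > R$; then $G^R$ is eventually constant, so its extension
\[
g^R(x) \coloneqq \sum_{n \in \ZZ} G^R(n) \, \ind{[n-1/2,\, n+1/2)}(x)
\]
is a nonnegative simple function on $\RR$ in $\BV_a^b(\RR)$. By the extension procedure from the proof of Theorem~\ref{T2}, $\Var(g^R) = \Var(G^R)$ and $Mg^R(n) = MG^R(n)$ for every $n \in \ZZ$, hence $\Var(MG^R) \le \Var(Mg^R)$. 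Applying the hypothesis \eqref{eq:conj2} to the nonnegative simple function $g^R$ yields
\[
\Var(MG^R) \le \Var(Mg^R) \le \Var(g^R) - \frac12 \big||a|-|b|\big| = \Var(G^R) - \frac12 \big||a|-|b|\big|.
\]

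To finish, I would let $R \to \infty$. The identity $\Var(G^R) = \Var_{[-R,R] \cap \ZZ}(G) + |G(-R) - a| + |G(R) - b|$ together with $G \in \BV(\ZZ)$ and $G(\pm\infty) = a, b$ gives $\Var(G^R) \to \Var(G)$. Moreover, $G^R \to G$ uniformly on $\ZZ$ because the tails $|G(y) - a|$ and $|G(y) - b|$ vanish as $|y| \to \infty$; the sublinearity of $M$ then yields $|MG(n) - MG^R(n)| \le \|G - G^R\|_\infty$, so $MG^R \to MG$ uniformly on $\ZZ$, and the lower semicontinuity of variation under pointwise convergence gives $\Var(MG) \le \liminf_R \Var(MG^R)$. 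Combining these observations with the chain of inequalities above produces the desired $\Var(MG) \le \Var(G) - \frac12 \big||a|-|b|\big|$.

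The main obstacle is simply coordinating the two procedures from Theorem~\ref{T2} with the truncation cleanly so that the bound on $\Var(MG^R)$ survives the limit $R \to \infty$; everything rests on the uniform control of the tails of $G$ afforded by $G \in \BV(\ZZ)$, with no new technical work needed beyond the proof of Theorem~\ref{T2} itself.
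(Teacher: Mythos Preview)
Your argument is correct and follows essentially the same route as the paper: reduce to nonnegative functions, pass to $\ZZ$ via the sampling half of Theorem~\ref{T2}, truncate $G$ to an eventually constant $G^R$, use the extension procedure to obtain a simple $g^R\in\BV_a^b(\RR)$ to which the hypothesis applies, and then let $R\to\infty$. The only cosmetic difference is in the limiting step: the paper fixes $\varepsilon$, chooses a finite window $[-N_*,N_*]$ capturing most of $\Var(MG)$, and controls $|MG_N(n)-MG(n)|$ there explicitly, whereas you invoke uniform convergence $MG^R\to MG$ (from $\|G-G^R\|_\infty\to 0$ and sublinearity) together with lower semicontinuity of the variation; these are equivalent formulations of the same estimate.
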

	
	\begin{proof}
		It is convenient to prove a version of the above for the discrete case,  $\KK = \ZZ$, and then apply Theorem~\ref{T2} and the extension procedure to obtain the result for  $\KK = \RR$.  
		
		Given nonnegative $G \in \BV(\ZZ)$ and $\varepsilon > 0$, we have $\Var_{[-N_*,N_*] \cap \ZZ} (M G) \geq \Var (MG) - \varepsilon$ for some $N_* = N_*(\varepsilon) \in \NN$.
		Note that $G \in \BV_a^b(\ZZ)$ for some $a,b\in [ 0,\infty)$. We choose $N = N(\varepsilon, N_*) \in \NN$ such that $|G(-n) - a| < \varepsilon / (5N_*)$ and $|G(n) - b| < \varepsilon / (5N_*)$ 
		if $n > N$.
		Let $G_N(n)$ be equal to $a$, $G(n)$, or $b$, for $n < N$, $n \in [-N,N]$, or $n > N$, respectively. Then
		\[
		\Var_{[-N_*,N_*] \cap \ZZ} (M G_N) \geq \Var_{[-N_*,N_*] \cap \ZZ} (M G) - \varepsilon \geq \Var (MG) - 2\varepsilon,
		\]
		because $|MG_N(n) - MG(n)| \leq \varepsilon / (5N_*)$. Since $\Var (G_N) \leq \Var (G)$, we are done.
	\end{proof}
 
	\section{Proof of Theorem~\ref{T1}} \label{S3}
	
	For a generic function $g \in \BV(\KK)$ one has 
	\[
	Mg \geq \frac{|g(-\infty)|+|g(\infty)|}{2}
	\quad \text{and} \quad 
	Mg(\pm \infty) =  \max \Big \{|g(\pm \infty)|, \frac{|g(-\infty)|+|g(\infty)|}{2} \Big \}.
	\]
	We say that $Mg$ has a \textit{local maximum} at $x\in \KK$ if there exist $y',y''\in\KK$ such that $y' < x < y''$ and 
	\begin{align} \label{max-def}
	\max\{Mg(y'), Mg(y'')\} < \sup\{ Mg(y) : y \in  \KK \cap [y',y'']  \}  = Mg(x);
	\end{align}
	a \textit{local minimum} of $Mg$ at $x$ is defined analogously. 
	
	We are ready to prove Theorem~\ref{T1}. By the extension procedure it suffices to deal with 
	$\KK = \RR$. 
	Indeed, given $F \in \BV_a^b(\ZZ)$ as in Theorem~\ref{T1} with the corresponding system $\{n_k\}_0^K$, define $f \in \BV_a^b(\RR)$  with the
	corresponding system $\{t_k\}_0^K$, $t_k=n_k - \frac{1}{2}$, by $f \coloneqq \sum_{n \in \ZZ} F(n) \ind{[n-1/2,n+1/2)}$. By using the separation condition $\alpha_k \alpha_{k+1} = 0$ for $F$, we verify that $f$ has the form as in Theorem~\ref{T1}. Moreover, $\Var (f) = \Var(F)$ and 
	$\Var (Mf) \geq \Var(MF)$. Then \eqref{eq:T1a} for $F$ follows from \eqref{eq:T1} for $f$.
	
	Until the end of this section we fix $f$ as in Theorem~\ref{T1} and denote $I_k \coloneqq (t_{k-1},t_k)$, $k=1,\ldots,K$, the interiors of the corresponding intervals.
	Without any loss of generality we may consider only nonnegative $f$. Hence, from now on, we assume $a,b\ge0$ and $\alpha_k\ge0$, $k=1,\ldots,K$. Clearly, $Mf$ may not have local maxima at all. However, if $Mf$ has a local maximum at $x$, then the following two cases may occur:
	\begin{enumerate}
		\item $Mf$ is constant on some interval $J \ni x$, i.e., $Mf$ has a local maximum at each point of $J$;
		\item there is no $J$ with this property, i.e., $x$ is \textit{isolated}.  
	\end{enumerate} 
	In the first case, considering $J$ as the longest interval with the declared property, we shall choose exactly one element $x_* \in J$ as a \emph{representative}. In addition every isolated point $x$ from the second case, if any, will also be called a representative. It follows from  Lemma~\ref{L2} that the number of representatives 
	is finite and does not exceed $K^2$. 
	 
	\begin{lemma} \label{L2}
		Let $Mf$ have a local maximum at $x$. Then there exists $x_*$  (either equal to $x$ or such that $Mf$ is constant on the closed interval connecting $x$ and $x_*$) with the property that $Mf(x_*)$ is attained for some interval $I_{x_*}$ which is centered at $x_*$ and such that $l(I_{x_*}) = l(I_{k'})$ and $r(I_{x_*}) = r(I_{k''})$ hold for some $1\le k'\le k''\le K$ satisfying $Mf(x_*) \leq \min \{\alpha_{k'}, \alpha_{k''}\}$.
	\end{lemma}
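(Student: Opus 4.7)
The plan is to analyze the radial profile $F_x(r) := \frac{1}{2r}\int_{x-r}^{x+r} f$ and combine phase analysis with local perturbations. Since $f$ is piecewise constant, $F_x(r)$ changes form only when $x \pm r$ crosses a breakpoint $t_k$; between consecutive values in $\{|x-t_k| : k = 0,\ldots,K\}$, $F_x(r)$ equals $\tfrac{D}{2} + \tfrac{C}{2r}$ and is strictly monotone or constant. Because $F_x(r) \to (a+b)/2$ as $r \to \infty$ and the local maximum hypothesis forces $Mf(x) > (a+b)/2$ (otherwise the universal bound $Mf \geq (a+b)/2$ would preclude the strict drop at $y'$ or $y''$), the supremum is attained at some finite $r_0 > 0$. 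Taking $r_0 := \sup\{r > 0 : F_x(r) = Mf(x)\}$, the monotonicity analysis forces $r_0$ to be a phase boundary, so at least one of $x \pm r_0$ coincides with some $t_k$; if both do, set $x_* := x$ and $I_{x_*} := [x - r_0, x + r_0]$.

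Otherwise, without loss of generality assume $x - r_0 = t_{k'-1}$ for some $k' \in \{1, \ldots, K\}$ while $x + r_0$ lies in the interior of a piece $I_p$ with $p \in \{k', \ldots, K+1\}$ (writing $I_{K+1} := [B, \infty)$ and $\alpha_{K+1} := b$ to handle the case $x + r_0 > B$). Local perturbations of the optimal interval, combined with the local maximum property, force $\alpha_p = Mf(x)$ and $\alpha_{k'} \geq Mf(x)$. Indeed, shrinking the right endpoint by $\delta$ shifts the center to $x - \delta/2$; if $\alpha_p < Mf(x)$, the new average exceeds $Mf(x)$, giving $Mf(x - \delta/2) > Mf(x)$ and contradicting the local maximum. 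Extending yields the reverse inequality $\alpha_p \leq Mf(x)$; hence $\alpha_p = Mf(x)$. An analogous shrink on the left produces $\alpha_{k'} \geq Mf(x)$.

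With these constraints, in the typical case $p \leq K$ the sliding family of intervals $[t_{k'-1}, (x+r_0) + 2\delta]$, centered at $x + \delta$ for $\delta \in [0, \delta_{\max}]$ with $\delta_{\max} := (t_p - (x+r_0))/2$, has average exactly $Mf(x)$ throughout, since the added mass contributes $2\delta \alpha_p = 2\delta\,Mf(x)$. Setting $x_* := x + \delta_{\max}$ and $k'' := p$, the interval $[t_{k'-1}, t_p] = [l(I_{k'}), r(I_{k''})]$ is centered at $x_*$, has average $Mf(x)$, and satisfies $Mf(x_*) \leq \min\{\alpha_{k'}, \alpha_{k''}\}$ via the monotonicity bounds $\alpha_{k'}, \alpha_{k''} \geq Mf(x)$. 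That $Mf$ is actually constant on $[x, x_*]$, so that $Mf(x_*) = Mf(x)$ and the chosen interval realizes $Mf(x_*)$, follows by combining the sliding lower bound $Mf(\cdot) \geq Mf(x)$ with the plateau structure of $Mf$ around the local maximum.

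The main obstacle will be the boundary case $p = K+1$, where sliding outward never reaches a new breakpoint. Here the perturbations give $b = Mf(x)$, so the natural move is to contract rather than extend: set $x_* := (t_{k'-1} + B)/2 < x$ and consider $[t_{k'-1}, B] = [l(I_{k'}), r(I_K)]$, whose average remains $Mf(x)$ because the removed tail $[B, x+r_0]$ averages to $b = Mf(x)$. A direct computation using the parametric family $[t_{k'-1}, 2y-t_{k'-1}]$ centered at $y \in [x_*, x]$ shows that each member has the same average $Mf(x)$, so $Mf$ is constant on $[x_*, x]$; a shrinking perturbation applied at $x_*$ finally yields $\alpha_K \geq Mf(x_*)$. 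The symmetric boundary case $x - r_0 < A$ is handled analogously.
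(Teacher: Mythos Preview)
Your approach is close in spirit to the paper's: both arguments use local perturbations of a maximizing interval to constrain the values of $f$ near its endpoints, then slide the interval until both endpoints align with breakpoints $t_j$. The structural difference is that you select the \emph{largest} maximizing radius $r_0$, so that the phase analysis of $F_x$ automatically pins one endpoint to a breakpoint, whereas the paper takes an arbitrary maximizing interval $I_x$, shows $f(l(I_x)^+)\ge Mf(x)$ and $f(r(I_x)^-)\ge Mf(x)$ by one-sided sliding, and then moves \emph{both} endpoints outward simultaneously via the convex family $I_{x_\theta}$ with $l(I_{x_\theta})=(1-\theta)l(I_x)+\theta\, l(I_{k'})$ and $r(I_{x_\theta})=(1-\theta)r(I_x)+\theta\, r(I_{k''})$. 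Your choice of $r_0$ buys a shortcut (one endpoint already aligned), at the price of the extra boundary case $p=K+1$ which the paper's symmetric treatment avoids.

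There is, however, one genuine omission. In the case ``both endpoints are breakpoints'' you set $x_*:=x$ and stop, but you never verify the essential conclusion $Mf(x_*)\le\min\{\alpha_{k'},\alpha_{k''}\}$. Having $x-r_0=t_{k'-1}$ says nothing by itself about the size of $\alpha_{k'}$. The fix is precisely the shrinking argument you deploy later: shrinking the left endpoint by $\delta$ produces an interval centered at $x+\delta/2$ with average $\tfrac{2r_0\,Mf(x)-\delta\alpha_{k'}}{2r_0-\delta}$, which exceeds $Mf(x)$ if $\alpha_{k'}<Mf(x)$ and contradicts the local maximum; symmetrically for $\alpha_{k''}$. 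You should state this uniformly for both cases rather than only for the one-breakpoint case. A minor secondary point: the phrase ``plateau structure of $Mf$ around the local maximum'' should be made explicit. Since the sliding family gives $Mf\ge Mf(x)$ on $[x,x_*]$, the witness $y''$ from the local-maximum definition cannot lie in $[x,x_*]$ (else $Mf(y'')\ge Mf(x)$), so $[x,x_*]\subseteq[y',y'']$ where $Mf\le Mf(x)$, forcing equality.
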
 
	\begin{proof}
		Take any sequence $\{\rho_n\}\subset (0,\infty)$ such that $Mf(x) = \lim_{n \to \infty} \frac{1}{2\rho_n} \int_{x-\rho_n}^{x+\rho_n} f$. Clearly, if $\{\rho_n\}$ has 
		a subsequence diverging to $\infty$, then $Mf(y) \geq Mf(x)$ for all $y \in \RR$. Thus,  $\{\rho_n\}$ is bounded. 
		
		Assume $\{\rho_n\}$ has a subsequence converging to $0$. Then $Mf(x) = \frac{f(x^-) + f(x^+)}{2}$ and so $f(x^-) = f(x^+) = Mf(x)$ by $Mf(x) \geq Mf(x^\pm) \geq f(x^\pm)$, where the first inequality holds because $Mf$ has a~local maximum at $x$. If $x \leq A$, then $Mf(x) = f(x^-) = a = \frac{f(y^-) + f(y^+)}{2} \leq Mf(y)$ for all $y < x$, contradicting \eqref{max-def}. Similarly, $x \geq B$ is impossible. Thus, $x \in I_k$ and $Mf(x) = \alpha_k$ for some $k$. We can replace $x$ by $c(I_k)$, since $Mf(y) \geq Mf(x)$ for all $y \in I_k$ so that $Mf$ is constant 
		on $I_k$ because $Mf$ has a local maximum at $x$. Taking $x_*=c(I_k)$ and $I_{x_*} = I_k$, we obtain the claimed property.
		
		If $\{\rho_n\}$ is separated from $0$, then it contains a subsequence converging to a positive number. Consequently, $Mf(x) = \frac{1}{|I_x|} \int_{I_x} f$ for some $I_x$ centered at $x$. Note that $f\big(l(I_x)^+\big) \geq M f(x)$, since otherwise we could show $M f(y) > Mf(x)$ for some $\delta > 0$ and all $y \in (x, x + \delta)$ by taking $I_y$ with $r(I_y) = r(I_x)$. In particular, $l(I_x) \geq A$ because otherwise we would have $Mf(y) \geq Mf(x)$ for all $y \leq x$, again by taking $I_y$ with $r(I_y) = r(I_x)$. 
		Thus, $l(I_x) \in \big[l(I_{k'}), r(I_{k'})\big)$ for some $k'$ such that $\alpha_{k'} \geq Mf(x)$. Similarly, $f\big(r(I_x)^-\big) \geq M f(x)$ and $r(I_x) \in \big(l(I_{k''}), r(I_{k''})\big]$ for some $k'' \geq k'$ such that $\alpha_{k''} \geq Mf(x)$. Let $x_* = \frac{l(I_{k'}) + r(I_{k''})}{2}$. If $x=x_*$, then $Mf(x)$ is attained for $\big(l(I_{k'}), r(I_{k''})\big)$. If not, then we can show $Mf(y) \geq Mf(x)$ for all $y$ in the closed interval connecting $x$ and $x_*$ by taking $x_\theta = (1-\theta) x + \theta x_*$ and $I_x \subseteq I_{x_\theta} \subseteq \big(l(I_{k'}), r(I_{k''})\big)$ with $l(I_{x_\theta})= (1-\theta) l(I_x) + \theta l(I_{k'})$ and $r(I_{x_\theta} ) = (1-\theta) r(I_x) + \theta r(I_{k''})$ for all $\theta \in [0,1]$. Since $Mf$ has a local maximum at $x$, it is constant on this closed interval and so $Mf(x_*)$ is attained for $\big(l(I_{k'}), r(I_{k''})\big)$, as desired.
		
		From the reasoning above, it follows that $Mf(x_*) \leq \min \{\alpha_{k'}, \alpha_{k''}\}$ in each case.  
	\end{proof} 
	
	From now on, we take each representative $x$ to be of the form $\frac{l(I_{k'}) + r(I_{k''})}{2}$ for  
	$1\le k' \leq k''\le K$ such that $Mf(x) = \frac{1}{|I_x|} \int_{I_x} f$, where $I_x = \big(l(I_{k'}), r(I_{k''})\big)$. 
	Furthermore, if $Mf$ has local maxima and there are $N\ge1$ representatives, say $x_1<x_2<\dots< x_N$, then for  $N\ge2$ and each $2\le n\le N$, 
	we choose $y_n\in (x_{n-1},x_{n})$, where $Mf$ has a local minimum. Then
	\begin{align} \label{var-split}
	\Var (Mf) = \Var_{(-\infty, x_1]}(Mf) + \Big( \sum_{n=2}^N Mf(x_{n-1}) - 2Mf(y_n) + Mf(x_{n}) \Big) + \Var_{[x_N,\infty)}(Mf).
	\end{align}
	
	Let $T_I(x) \coloneqq \frac{|I|}{|J_I(x)|}$ for all $x\in\mathbb{R}$, where $J_I(x)$ is the smallest interval $J \supseteq I$ with $c(J)=x$. 
	The next result says that the variation of $Mf$ is controlled locally by the variations of $T_{I_k}$, $1 \leq k \leq K$.
	 
	\begin{lemma} \label{L3}
		Suppose $Mf$ has $N\ge1$ representatives $x_1 < \dots < x_N$.  
		Then
		\begin{align*}
		Mf(x_{n-1}) - Mf(y_{n}) \leq \sum_{k=1}^K \alpha_k \Var_{[x_{n-1},y_{n}]} (T_{I_k}) 
		\quad \text{and} \quad 
		Mf(x_n) - Mf(y_n) \leq \sum_{k=1}^K \alpha_k \Var_{[y_n, x_n]} (T_{I_k}) 
		\end{align*}
		for all $2\le n \le N$ and $x_1<y_2<x_2<\dots<x_{N-1}<y_N<x_N$. Similarly,
		\begin{align*}
		Mf(x_{N}) \leq \sum_{k=1}^K \alpha_k \Var_{[x_{N},\infty)} (T_{I_k}) 
		\quad \text{and} \quad 
		Mf(x_1) \leq \sum_{k=1}^K \alpha_k \Var_{(-\infty, x_1]} (T_{I_k}). 
		\end{align*}
		If $\alpha_1 \leq a$, then the first summand in each of the four sums above may be omitted.   
	\end{lemma}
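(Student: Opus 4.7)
The strategy is a sliding-interval argument. By Lemma~\ref{L2}, the representative $x_{n-1}$ is equipped with a centered interval $I_{x_{n-1}}=\bigl(l(I_{k'}),r(I_{k''})\bigr)$ realizing $Mf(x_{n-1})=|I_{x_{n-1}}|^{-1}\int_{I_{x_{n-1}}}f$. For $y\ge x_{n-1}$ I take $I(y):=J_{I_{x_{n-1}}}(y)$, the smallest interval centered at $y$ containing $I_{x_{n-1}}$; since $y\ge x_{n-1}=c(I_{x_{n-1}})$, this keeps $l(I(y))=l(I_{k'})$ and gives $|I(y)|=|I_{x_{n-1}}|+2(y-x_{n-1})$. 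Setting $P(y):=|I(y)|^{-1}\int_{I(y)}f\le Mf(y)$ and using $P(x_{n-1})=Mf(x_{n-1})$ yields
\[
Mf(x_{n-1})-Mf(y_n)\le P(x_{n-1})-P(y_n).
\]

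To estimate the right-hand side I expand $f=\sum_{k=1}^K\alpha_k\ind{I_k}$ on $[A,B]$. Since $I_k\subseteq I_{x_{n-1}}\subseteq I(y_n)$ for $k'\le k\le k''$, and any additional intersection with $I_k$ for $k>k''$ only raises $P(y_n)$ by nonnegativity of $f$,
\[
P(x_{n-1})-P(y_n)\le\sum_{k=k'}^{k''}\alpha_k|I_k|\bigl(|I_{x_{n-1}}|^{-1}-|I(y_n)|^{-1}\bigr).
\]
It thus suffices to prove the \emph{per-piece inequality}
\[
\frac{|I_k|}{|I_{x_{n-1}}|}-\frac{|I_k|}{|I(y_n)|}\le\Var_{[x_{n-1},y_n]}(T_{I_k})\qquad(k'\le k\le k''),
\]
since multiplying by $\alpha_k$ and summing (extended to all $k\in\{1,\dots,K\}$ by nonnegativity of the added variation terms) recovers the target bound. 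The mirror-image inequality $Mf(x_n)-Mf(y_n)\le\sum_k\alpha_k\Var_{[y_n,x_n]}(T_{I_k})$ follows by using $I(y):=J_{I_{x_n}}(y)$ for $y\le x_n$, while the one-sided bounds at $x_1$ and $x_N$ come from letting $y\to\mp\infty$ in the same construction; when $\alpha_1\le a$, the ambient value $a$ gives a competing lower bound $P(y)\ge a$ far to the left that already absorbs the $I_1$-contribution, so the corresponding summand can be dropped.

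The main difficulty is the per-piece inequality. For $k=k'$ it holds with equality, since $c(I_{k'})\le x_{n-1}$ makes $T_{I_{k'}}$ monotone on $[x_{n-1},y_n]$ with $|J_{I_{k'}}(y)|=|I(y)|$, so both sides equal $|I_{k'}|\bigl(|I_{x_{n-1}}|^{-1}-|I(y_n)|^{-1}\bigr)$. For interior and right-boundary pieces the tent $T_{I_k}$ may first ascend to its peak $1$ before descending, giving $\Var_{[x_{n-1},y_n]}(T_{I_k})=2-T_{I_k}(x_{n-1})-T_{I_k}(y_n)$ in the tent sub-case. I will verify the inequality by a direct case analysis in the parameters $p:=x_{n-1}-l(I_k)$, $q:=r(I_k)-x_{n-1}$, $\tau:=y_n-x_{n-1}$, exploiting the geometric constraints $p,q\le|I_{x_{n-1}}|/2$ (from $I_k\subseteq I_{x_{n-1}}$) and $p+\tau,q+\tau\le|I(y_n)|/2$ (from the shifted inclusion $I_k\subseteq I(y_n)$); in each sub-case the claim reduces to an elementary AM-GM-type inequality, typified by $2(r(I_k)-y_n)(y_n-l(I_k))\le\tfrac12|I_{x_{n-1}}|\cdot|I(y_n)|$.
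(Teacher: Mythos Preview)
Your sliding-interval strategy is exactly the paper's: pick the realizing interval $I_{x_*}$ from Lemma~\ref{L2}, slide one endpoint to reach the neighbouring minimum, drop the nonnegative mass outside $I_{x_*}$, and reduce to a per-piece estimate. Two places in your write-up need attention.

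\textbf{The per-piece inequality.} You leave this as a promised case analysis in $(p,q,\tau)$. It does go through, but it is avoidable: the paper handles it with the single monotonicity
\[
\frac{1}{\rho_2}-\frac{1}{\rho_2+\rho_0}\le\frac{1}{\rho_1}-\frac{1}{\rho_1+\rho_0}\qquad(0<\rho_1\le\rho_2,\ \rho_0>0).
\]
When $c(I_k)\notin(x_{n-1},y_n)$ one applies this directly with $\rho_0=2\tau$, $\rho_2=|I_{x_{n-1}}|$, and $\rho_1=|J_{I_k}(\cdot)|$ at the closer endpoint; the inclusion $I_k\subseteq I_{x_{n-1}}$ gives $\rho_1\le\rho_2$. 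When $c(I_k)\in(x_{n-1},y_n)$ one inserts the intermediate interval $I'$ centred at $c(I_k)$ with the same fixed endpoint as $I_{x_{n-1}}$ and $I(y_n)$, then applies the same inequality twice; the two pieces reassemble to $2T_{I_k}(c(I_k))-T_{I_k}(x_{n-1})-T_{I_k}(y_n)=\Var_{[x_{n-1},y_n]}(T_{I_k})$. This replaces your AM--GM computations entirely.

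\textbf{The $\alpha_1\le a$ clause.} Your justification (``the ambient value $a$ gives a competing lower bound $P(y)\ge a$ far to the left that absorbs the $I_1$-contribution'') is not an argument: in the interior estimates you never send $y$ far to the left, and even in the boundary estimate the limit of $P(y)$ is irrelevant to whether a \emph{summand} can be dropped. The correct mechanism is that the sum you actually produce runs only over $k'\le k\le k''$, and $k'=1$ cannot occur. Indeed, if $k'=1$ then Lemma~\ref{L2} gives $Mf(x_*)\le\alpha_1\le a$; since $l(I_{x_*})=l(I_1)=A$, extending $I_{x_*}$ leftward adds only the constant value $a\ge Mf(x_*)$, so the average does not drop and $Mf(y)\ge Mf(x_*)$ for every $y<x_*$, contradicting \eqref{max-def}. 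Hence $k'\ge2$ whenever $\alpha_1\le a$, and the $k=1$ term is absent from the start. A symmetric remark covers $\alpha_K\le b$.
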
  
	
	\begin{proof}
		We shall estimate $Mf(x_n) - Mf(y_n)$ and for $Mf(x_{n-1}) - Mf(y_{n})$ the proof is analogous.
		Given $x_n$, we take $I_{x_n}$ as in the conclusions of Lemma~\ref{L2} with accompanying $1\le k'\le k''\le K$.
		Then, for $I_{y_n}$  centered at $y_n$ and such that $r(I_{y_n}) = r(I_{x_n})$, we have $I_{x_n} \subseteq I_{y_n}$ and so
		\[
		Mf(x_n) - Mf(y_n) \leq 
		\frac{1}{|I_{x_n}|}\int_{I_{x_n}} f - \frac{1}{|I_{y_n}|}\int_{I_{y_n}} f 
		\leq \bigg( \frac{1}{|I_{x_n}|} - \frac{1}{|I_{y_n}|} \bigg) \int_{I_{x_n}} f
		= \sum_{k=k'}^{k''} 
		\bigg( \frac{\alpha_k |I_k|}{|I_{x_n}|} -  \frac{\alpha_k |I_k|}{|I_{y_n}|} \bigg).
		\] 
		Note that if $\alpha_1 \leq a$ and $k'=1$, then $\alpha_1 \geq Mf(x_n)$ by Lemma~\ref{L2}, contradicting \eqref{max-def}. Indeed, we can show $Mf(y) \geq Mf(x_n)$ for all 
		$y < x_n$ by taking $I_y$ with $r(I_y) = r(I_{x_n})$. This explains the final sentence of the statement. Next, we notice that the numerical inequality $ \frac{1}{\rho_2} - \frac{1}{\rho_2 + \rho_0}\le \frac{1}{\rho_1} - \frac{1}{\rho_1 + \rho_0}$ holds for $\rho_0, \rho_1, \rho_2 \in (0,\infty)$ when $\rho_1 \leq \rho_2$. Thus, if either $x_n \leq c(I_k)$ or $c(I_k) \leq y_n$, then 
		\[
		\frac{\alpha_k |I_k|}{|I_{x_n}|} -  \frac{\alpha_k |I_k|}{|I_{y_n}|}
		\leq 
		\alpha_k \bigg| \frac{|I_k|}{|J_{I_k}(x_n)|} -  \frac{|I_k|}{|J_{I_k}(y_n)|} \bigg|
		= \alpha_k |T_{I_k}(x_n) - T_{I_k}(y_n)| = \alpha_k \Var_{[y_n, x_n]} (T_{I_k}).
		\]
		In the remaining case, $y_n < c(I_k) < x_n$, we add and subtract $\frac{\alpha_k |I_k|}{|I'|}$ with $I'$ such that $c(I') = c(I_k)$ and $r(I') = r(I_{y_n}) = r(I_{x_n})$. This gives 
		\[
		\alpha_k \bigg( \frac{|I_k|}{|I_{x_n}|} - \frac{|I_k|}{|I'|} + \frac{|I_k|}{|I'|} -  \frac{|I_k|}{|I_{y_n}|} \bigg)
		\leq 
		\alpha_k \bigg( \, \bigg| \frac{|I_k|}{|J_{I_k}(x_n)|} -  \frac{|I_k|}{|J_{I_k}(c(I_k))|} \bigg|
		+ \bigg| \frac{|I_k|}{|J_{I_k}(c(I_k))|} -  \frac{ |I_k|}{|J_{I_k}(y_n)|} \bigg| \, \bigg) 
		\]
		and the last quantity does not exceed 
		$\alpha_k \big(2T_{I_k}(c(I_k)) - T_{I_k}(x_n) - T_{I_k}(y_n)\big) = \alpha_k \Var_{[y_n, x_n]} (T_{I_k})$.
		
		Similarly, taking $y_1 \to -\infty$ and $I_{y_1}$ centered at $y$ and such that $r(I_{y}) = r(I_{x_1})$, we obtain
		\[
		Mf(x_1)
		= \lim_{y_1 \to -\infty} \bigg( \frac{1}{|I_{x_1}|} - \frac{1}{|I_{y_1}|} \bigg) \int_{I_{x_1}} f 
		\leq \lim_{y_1 \to -\infty} \sum_{k=1}^K \alpha_k \Var_{[y_1, x_1]} (T_{I_k})
		= \sum_{k=1}^K \alpha_k \Var_{(-\infty, x_1]} (T_{I_k})
		\]
		with the first summand omitted if $\alpha_1 \leq a$. As before, for $Mf(x_{N})$ the proof is analogous. 
	\end{proof} 
	    
	\begin{proof}[Proof of Theorem~\ref{T1}]
		For simplicity, we assume $a \geq b$. If $Mf$ has no local maxima, then 
		\[
		\Var(Mf) = \frac{a-b}{2} =\big(f(+\infty) - f(-\infty)\big) - \frac{a-b}{2} \leq \Var(f) - \frac{a-b}{2}.
		\]
		Assume therefore that $N\ge1$ and $x_1 < \dots < x_N$ are all representatives. Since $a \geq b$, we obtain $Mf(x) \geq Mf(\infty)$ for all $x \in \RR$ and so $Mf$ decreases on $[x_N,\infty)$. Therefore, Lemma~\ref{L3} gives 
		\[
		\Var_{[x_N,\infty)}(Mf) = Mf(x_N) - \frac{a+b}{2}
		\leq \sum_{k=1}^K \alpha_k \Var_{[x_N, \infty)} (T_{I_k}) - \frac{a+b}{2}
		\]
		with the first summand omitted if $\alpha_1 \leq a$.
		If $Mf$ is nondecreasing on $(- \infty, x_1]$, then similarly
		\[
		\Var_{(-\infty, x_1]}(Mf) = Mf(x_1) - a 
		\leq \sum_{k=1}^K \alpha_k \Var_{(-\infty, x_1]} (T_{I_k}) - a.
		\]
		In this case, combining \eqref{var-split} with the two estimates above and Lemma~\ref{L3} gives \eqref{eq:T1}, since
		\[
		\Var(Mf) \leq \sum_{k=1}^K \alpha_k \Var(T_{I_k}) - a - \frac{a+b}{2} = \sum_{k=1}^K 2 \alpha_k - a - b - \frac{a-b}{2}
		\leq \Var(f) - \frac{a-b}{2}.
		\]
		If 
		$Mf$ is not nondecreasing on $(- \infty, x_1]$, then 
		$Mf$ has a local minimum at $y_1 \in [A, x_1)$ such that 
		\[
		\Var_{(-\infty, x_1]}(Mf) = a - 2 Mf(y_1) + Mf(x_1)
		\leq \min \Big \{ Mf(x_1) - b, a - \frac{a+b}{2} + \sum_{k=1}^K \alpha_k \Var_{[y_1, x_1]} (T_{I_k}) \Big \}. 
		\]
		The above inequality, which splits into two parts, follows from $(a+b)/2\le Mf$ (first part) and the estimate for $Mf(x_1)$ from Lemma~\ref{L3} with the additional nonnegative term  $2Mf(y_1)-(a+b)/2$ added (second part). Again the first summand is not used if $\alpha_1 \leq a$.
		Now, if $\alpha_1 \leq a$, then choosing the second term from the above minimum and applying \eqref{var-split} and Lemma~\ref{L3}, we obtain
		\[
		\Var(Mf) \leq
		\sum_{k =2}^K 2 \alpha_k - b = (a - \alpha_1) + \Big( \alpha_1 + \sum_{k =2}^{K-1} 2 \alpha_k + \alpha_K \Big) + (\alpha_K - b) - a \leq \Var(f) - a 
		\]
		by the separation condition $\alpha_k \alpha_{k+1} = 0$. Finally, if $\alpha_1 > a$, then we choose the first term from the relevant minimum. Now $x_1 > r(I_1)$, since otherwise we would have 
		$Mf(y_1) > a$ by taking $I_{y_1}$ with $r(I_{y_1}) = r(I_1)$. Also, $Mf(x_1) < \alpha_1$ cannot occur because $Mf(r(I_1)) \geq \alpha_1 > Mf(-\infty)$, while $x_1$ is the leftmost representative. Similarly, $Mf(x_1) = \alpha_1$ would imply   
		$Mf(y) = \alpha_1$ for all $y \in [c(I_1),x_1]$ due to $x_1$ being the leftmost representative, leading to the contradiction $Mf(y_1) > a$. Thus, $l(I_{x_1}) = l(I_{k'})$ for some $k'$ with 
		$\alpha_{k'} > \alpha_1 > a$ and $c(I_{k'}) \leq x_1$ by Lemma~\ref{L2}. We then obtain
		\[
		\Var(Mf) \leq \alpha_{k'} - b + \sum_{k=1}^K \alpha_k \Var_{[x_1, \infty)} (T_{I_k}) - \frac{a+b}{2} \leq (a-\alpha_1) + \Var(f) - \frac{a+b}{2} < \Var(f) - \frac{a-b}{2}, 
		\]
		since by $x_1 \geq c(I_{k'}) > c(I_1)$ the sum above does not exceed $2(\alpha_1 + \dots + \alpha_K)-\alpha_1 - \alpha_{k'}$.   
	\end{proof}

	\section{Further comments} \label{S4}
Here we collect several thoughts that complement our main results. We verify optimality of the constants in Conjecture~\ref{conj2}, discuss variants of 
\eqref{eq:T1} and \eqref{eq:T1a} for other operators, comment on the special case of indicator functions, and point out limitations of our proof method.   

First, we note that the constants $1$ and $\frac12$ in \eqref{eq:conj2} are optimal. Indeed, for $a,b \in [0,\infty)$ with $a \geq b$ and $N \in \NN$, take $g_N \in \BV_a^b(\RR)$ defined by 
$
g_N \coloneqq a \ind{(-\infty,-1)} + N \ind{[-1,1)} + b \ind{[1,\infty)}.
$ 
Then for $N \geq a$, a simple calculation shows $\Var(g_N) = 2N-a-b$ and $\Var(M g_N) = 2N - a - \frac{a+b}{2}$. Letting $N \to \infty$, we obtain $C(\RR) \geq 1$ and $c_a^b(\RR) \leq \frac{a-b}{2}$.

Next, we show that certain variants of Conjecture~\ref{conj2} 
hold for other maximal operators. Set
\begin{align*}
\overrightarrow{M}g(x) \coloneqq \sup_{r>0} \frac{1}{r} \int_{x}^{x+r} |g| 
\quad \text{and} \quad
\overrightarrow{M}G(n) \coloneqq \sup_{m\ge0} \frac{1}{m+1} \sum_{y  = n}^{n + m} |G(y)|,
\end{align*}
and, similarly,
\begin{align*}
\widetilde{M}g(x) \coloneqq \sup_{r_1,r_2>0} \frac{1}{r_1+r_2} \int_{x-r_1}^{x+r_2} |g|
\quad \text{and} \quad
\widetilde{M}G(n) \coloneqq \sup_{m_1,m_2\ge0} \frac{1}{m_1+m_2+1} \sum_{y  = n-m_1}^{n + m_2} |G(y)|.
\end{align*}
We call $\overrightarrow{M}$ and $\widetilde{M}$ the one-sided and uncentered maximal operators, respectively. When $\KK = \ZZ$, one can instead consider a variant of $\widetilde{M}$ allowing only
$m_1,m_2 \geq 0$ with $m_1 + m_2 +1 \in 2\NN-1$ so that the corresponding intervals are balls in $\ZZ$.
This does not change the conclusion below.  

Since the sampling procedure works in the context of $\overrightarrow{M}$ or $\widetilde{M}$ with only minor adjustments, for all $a,b \in \CC$ the associated constants $\overrightarrow{C}(\RR),  \overrightarrow{c_a^b}(\RR), \widetilde{C}(\RR), \widetilde{c_a^b}(\RR)$ are not worse than their discrete counterparts. Furthermore, when $\KK = \ZZ$ and $a,b \in [0,\infty)$, it is natural to conjecture that $\overrightarrow{C}(\ZZ) = \widetilde{C}(\ZZ) = 1$, $ \overrightarrow{c_a^b}(\ZZ) = \max\{ 0, b-a\}$, and $\widetilde{c_a^b}(\ZZ) = |b-a|$. Indeed, a suitable variant of \eqref{eq:T1a} follows because for each nonnegative $G \in \BV_a^b(\ZZ)$ one has $\overrightarrow{M}G \geq \max\{G,b\}$ and $\widetilde{M}G \geq \max\{G,a,b\}$, while the equality $\overrightarrow{M}G(n) = G(n)$ or $\widetilde{M}G(n) = G(n)$ holds whenever $\overrightarrow{M}G$ or $\widetilde{M}G$ has a local maximum
at $n$. Considering $G_N \in \BV_a^b(\ZZ)$ given by $G_N \coloneqq a \ind{-\NN} + N \ind{\{0\}} + b \ind{\NN}$ and $g_N \in \BV_a^b(\RR)$ given by $g_N \coloneqq \sum_{n \in \ZZ} G_N(n) \ind{[n-1/2,n+1/2)}$, we verify that the postulated constants are optimal. 

\begin{remark} \label{R2}
	For all $a,b \in [0,\infty)$ we have
	\begin{enumerate}
		\item[{\rm(1)}] $
		\overrightarrow{C}(\RR) = \overrightarrow{C}(\ZZ) = 1$
		and $\overrightarrow{c_a^b}(\RR) = \overrightarrow{c_a^b}(\ZZ) = \max\{ 0, b-a\}$,
		\item[{\rm(2)}] $\widetilde{C}(\RR) = \widetilde{C}(\ZZ) = 1$
		and $\widetilde{c_a^b}(\RR) = \widetilde{c_a^b}(\ZZ) = |b-a|$.
	\end{enumerate}
\end{remark}

In the case of indicator functions our proof of \eqref{eq:T1} and \eqref{eq:T1a} is simpler. Indeed, let $f \in \BV(\RR)$ be as in Theorem~\ref{T1} with 
$a,b,\alpha_k \in \{0,1\}$. Now, if $Mf$ has a local maximum at $x$, then either $f(x)=0$ or $x$ can be adjusted to $x_* = c(I_k)$ for some $k$. 
Moreover, if $Mf$ has a local minimum at $y$, then $f(y)=0$. Thus, we can replace $T_{I_k}$ by $M \ind{I_k}$, since both functions coincide on 
$\{c(I_k)\} \cup (\RR \setminus I_k)$. In particular, if $Mf$ is monotone on the closed interval $J$ with endpoints $x, y$ as before, then
	\[
	\Var_J(Mf) = Mf(x) - Mf(y) \leq \frac{1}{|I_x|} \int_{I_x} |f| - \frac{1}{|I_y|} \int_{I_y} |f| \leq \sum_{k =1}^K \alpha_k \Var_{J} (M \ind{I_k}). 
	\]  
	Using this and relevant estimates at $\pm \infty$ yields \eqref{eq:T1}, and \eqref{eq:T1a} follows 
	by the extension procedure.

Finally, notice that the method presented in the proof of Theorem~\ref{T1} has some natural limitations. Indeed, during the proof we always choose $I_y$ to be the smallest interval centered at $y$ which contains $I_x$. This choice is sometimes very far from optimal. To illustrate this obstacle, let us take $g \in \BV_0^0(\RR)$ given by $g \coloneqq \ind{[-2,-1) \cup [1,2)} + C \ind{[-2,2)}$ with $C = 10^6$, say. Then $Mg$ has local maxima at $-\frac{3}{2}, 0, \frac{3}{2}$ with the related values being $C+1, C+\frac{1}{2}, C+1$, and local minima at $-\frac{1}{2}, \frac{1}{2}$ with the related values being $C+\frac{1}{3},C+\frac{1}{3}$. Thus, $\Var(Mg) = 2C + 4 - \frac{1}{3} \leq 2C + 4 = \Var(g)$, as desired. However, for $x = 0$ we have $I_x = (-2,2)$ so that for $y = \frac{1}{2}$ we should take $I_y = (-2,3)$ in Theorem~\ref{T1}. Unfortunately, the difference between the corresponding averages is much larger than $Mg(0) - Mg(\frac{1}{2})$, therefore $\Var(Mg) \leq \Var(g)$ cannot be verified this way.

\subsection*{Acknowledgments} Partial supports for this research were given to Paul Hagelstein by a Simons Foundation grant (\#521719 to Paul Hagelstein), to Dariusz Kosz by a National Science Centre of Poland grant (SONATA BIS 2022/46/E/ST1/00036), and to Krzysztof Stempak by ZUS, Poland.
We also thank the anonymous referee for providing valuable suggestions and observations, which significantly improved the presentation of the results.

\end{document}